\numberwithin{equation}{section}
\newtheorem{theorem}{Theorem}
\newtheorem{lemma}{Lemma}
\newtheorem{cor}{Corollary}
\theoremstyle{definition}
\newcommand{\C}{\mathbb{C}}
\newcommand{\N}{\mathbb{N}}
\begin{document}
\title{ A note on  Hayman's conjecture}
\author{Ta Thi Hoai An}
\address{Institute of Mathematics, Vietnam Academy of Science and Technology\\
18 Hoang Quoc Viet Road, Cau Giay District \\
10307 Hanoi,  Vietnam}
\address{and: Institute of Mathematics and Applied Sciences (TIMAS)\\ Thang Long University\\ Hanoi,  Vietnam}
\email{tthan@math.ac.vn}
\author{Nguyen Viet Phuong}
\address{Thai Nguyen University of Economics and Business Administration, Vietnam} \email{nvphuongt@gmail.com}

\thanks{Key words: Meromorphic Functions, Entire Functions, Nevanlinna theory, Value Distribution,  Differential Polynomial.}

\begin{abstract}
In this paper, we will give suitable conditions  on differential polynomials $Q(f)$   such that they take every finite non-zero value infinitely often, where $f$ is a meromorphic function in complex plane. These results are related to Problem 1.19 and Problem 1.20 in a book of Hayman and Lingham \cite{HL}. As consequences, we give a new proof of the Hayman conjecture.  Moreover, our results allow  differential polynomials $Q(f)$ to have some terms of any degree of $f$
and also the hypothesis $n>k$ in  \cite[Theorem 2]{BE} is replaced by    $n\ge 2$ in our result.
 \end{abstract}

\thanks{The  authors are supported  by Vietnam's National Foundation for Science and Technology Development (NAFOSTED), under grant number  101.04-2017.320}

\baselineskip=15.5truept 
\maketitle 
\pagestyle{myheadings}
\markboth{}{}

\section{ Introduction and main results }

In 1940, Milloux obtained the first result involving the value distribution of meromorphic functions together with their derivatives and called usually Milloux's inequality. This result has led to a series of important research questions, for example Hayman \cite{H1} (or Problem 1.19 in \cite{HL}) proved that if $f$ is a meromorphic function in the plane omitting a finite value $a$, and if its $k$-th derivative $f^{(k)}$, for some $k\geq 1$, omits a finite nonzero value $b$, then $f$ is constant. This result is known as {\it Hayman's alternative}. 
Closely linked to Milloux's inequality and Hayman's alternative is Hayman's conjecture on the value distribution of certain differential monomials,  if $f$ is
a transcendental meromorphic function and $n\in \N$, then Hayman conjectured that $(f^{n})'$ takes every finite nonzero value infinitely often for all $n\ge 2$. Hayman \cite{H1} himself proved this conjecture for $n \ge 4,$  Mues \cite{M} proved the case of $n = 3$. 
In 1995, Bergweiler and Eremenko \cite{BE},  Chen and Fang  \cite{CF}, and Zalcman  \cite{Za} independently proved the conjecture for $n = 2.$ 
In \cite{BL, Cl, He, M, Za}, the authors generalized Hayman's conjecture to $f^{(k)}$. The effective result in this direction was given in \cite[Theorem 2]{BE}: if $f$ is a transcendental
meromorphic function in the plane and $m > k \ge 1 $ then $(f^m)^{(k)}$ assumes every finite non-zero value
infinitely often. A closed connection was given in   \cite[Problem 1.20]{HL}, which was proven for $n\ge 5$ in \cite{H1}, $n=4$ in \cite{M} and $n=3$ in \cite{BL}: If $f$ is non-constant and meromorphic in the plane and $n\ge 3$, then $f'-f^n$ assumes all finite complex values.

  A question arising in connection with Hayman's alternative was given by Eremenko and Langley \cite{EL}: whether $(f^m)^{(k)}$ can be replaced by a more general term, such as a linear differential polynomial $$F=L[f]=f^{(k)}+a_{k-1}f^{(k-1)}+\dots +a_0f,$$ where  the coefficients $a_j$ are small functions of $f.$ The following counterexample gives a negative answer to above question. Let $f(z)=1+e^{-z^2}$ and $F=L[f]=f''+2zf'+3f=3+e^{-z^2}.$ Then $f(z)\ne 1$ and $F(z)\ne 3$, but $f$ is not constant.
 Until now, most  results related to Hayman's conjecture and Hayman's inequality  were considered for   non-constant differential polynomials $Q$ in $f$, where all of
whose terms have degree at least 2 in $f$ and its derivatives (see \cite{C}, \cite{Hi}).

 In this paper, we will  generalize Hayman's conjecture to differential polynomials. Here, terms of the polynomials are not necessary all to have degree at least 2.
 
Let $P(z)$ and $Q(z)$ be polynomials in $\C[z]$ of degree $p$ and $q$   respectively. 
 We write
\begin{align*}&Q(z) = b(z-\alpha_1)^{m_1}(z-\alpha_2)^{m_2}\dots(z-\alpha_l)^{m_l},\quad \text{and}\\
&P(z) = c(z-\beta_1)^{n_1}(z-\beta_2)^{n_2}\dots(z-\beta_h)^{n_h}.\end{align*} where $b,c\in\C^*.$

Our results are as follows.

\begin{theorem}\label{th1}
Let $k$ be an positive integer.  If $f$ is a transcendental meromorphic function and $q\ge l+1,$ then $[Q(f)]^{(k)}$ takes every finite non-zero value infinitely often.
\end{theorem}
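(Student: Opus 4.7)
I plan to argue by contradiction. Assume $[Q(f)]^{(k)}$ takes the value $c$ only finitely often, equivalently $\bar N(r, 1/([Q(f)]^{(k)} - c)) = O(\log r)$, and derive $T(r, f) = O(\log r)$, contradicting the transcendence of $f$. Set $g := Q(f)$. From the factorization of $Q$ one immediately has $T(r, g) = q\, T(r, f) + O(1)$ and $\bar N(r, g) = \bar N(r, f)$; since a zero of $f - \alpha_i$ of multiplicity $s$ is a zero of $g$ of multiplicity $m_i s$, one also has $\bar N(r, 1/g) = \sum_{i=1}^l \bar N(r, 1/(f - \alpha_i))$. Moreover, such a point is automatically a zero of $g^{(k)}$ whenever $m_i s > k$, and therefore cannot be a zero of $g^{(k)} - c$.

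The argument will rest on two classical tools. First, Nevanlinna's second main theorem applied to $f$ with the $l+1$ targets $\alpha_1, \ldots, \alpha_l, \infty$:
\[ (l-1)\, T(r, f) \leq \bar N(r, f) + \sum_{i=1}^l \bar N(r, 1/(f - \alpha_i)) + S(r, f). \]
Second, the Milloux--Hayman inequality applied to $g$:
\[ T(r, g) \leq \bar N(r, g) + N_{k+1}(r, 1/g) + \bar N(r, 1/(g^{(k)} - c)) + S(r, g), \]
where $N_{k+1}$ truncates each zero multiplicity to $k+1$. By the contradictive hypothesis the last term is $O(\log r)$. Substituting $T(r, g) = qT(r, f)$ and the counting-function identifications, then estimating $N_{k+1}(r, 1/g)$ via the multiplicity dictionary (each zero of $f - \alpha_i$ of multiplicity $s$ contributes $\min(m_i s, k+1)$), the plan is to collapse both inequalities into a single estimate of the form $q\, T(r, f) \leq A \cdot T(r, f) + S(r, f) + O(\log r)$ with $A \leq l + 1$; the hypothesis $q \geq l + 1$ will then force $T(r, f) = O(\log r)$.

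\emph{Main obstacle.} The delicate point is the sharp bookkeeping of $N_{k+1}(r, 1/g)$ at the boundary $q = l + 1$. The naive bound $N_{k+1}(r, 1/g) \leq (k+1) \bar N(r, 1/g) \leq (k+1) l\, T(r, f)$ would only yield the far weaker conclusion $q \leq 1 + (k+1)l$. The required refinement comes from separating geometric zeros of $g$ of multiplicity $> k$ (which are automatically zeros of $g^{(k)}$, hence do not contribute to $\bar N(r, 1/(g^{(k)} - c))$) from those of multiplicity $\leq k$, and from harvesting the arithmetic surplus $q - l \geq 1$ provided by the multiple roots of $Q$. It is precisely this bookkeeping of the $m_i$'s -- exploiting that at least one $m_i \geq 2$ -- that allows the argument to close at the borderline $q = l + 1$.
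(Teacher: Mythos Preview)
Your plan has a genuine gap: the two classical inequalities you invoke are \emph{not} strong enough to close the case $q=l+1$, and the ``bookkeeping'' you defer cannot be completed with them alone.  To see this concretely, take the borderline instance $Q(z)=z^{2}$ (so $l=1$, $q=2$) and $k=1$; this is exactly the $n=2$ case of Hayman's conjecture.  With $g=f^{2}$, every zero of $f$ of multiplicity $s$ is a zero of $g$ of multiplicity $2s\ge 2$, so $N_{k+1}(r,1/g)=N_{2}(r,1/g)=2\,\overline N(r,1/f)$.  Your Milloux--Hayman estimate then becomes
\[
2\,T(r,f)\;\le\;\overline N(r,f)+2\,\overline N(r,1/f)+\overline N\!\left(r,\frac{1}{g'-c}\right)+S(r,f),
\]
while for $l=1$ the Second Main Theorem with targets $\{\alpha_1,\infty\}$ is vacuous.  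If $\overline N(r,1/(g'-c))=O(\log r)$ you would need $\overline N(r,f)+2\,\overline N(r,1/f)<(2-\varepsilon)T(r,f)$, but no such bound is available in general (think of an entire $f$ with $\overline N(r,1/f)\sim T(r,f)$).  The ``separation of zeros of $g$ of multiplicity $>k$'' gives nothing extra here, since \emph{all} zeros of $g$ already have multiplicity $\ge 2>k$; and the ``arithmetic surplus'' $q-l=1$ does not translate into a usable saving.  In short, the $n=2$ Hayman case cannot be obtained from Milloux--Hayman plus the SMT; historically it required normal-family/Ahlfors techniques (Bergweiler--Eremenko, Chen--Fang, Zalcman).

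The paper supplies the missing ingredient by invoking Yamanoi's theorem (Lemma~\ref{lemma3.3}), applied to $Q(f)$ with $A=\{0\}$:
\[
k\,\overline N(r,f)+N\!\left(r,\tfrac{1}{Q(f)}\right)-\overline N\!\left(r,\tfrac{1}{Q(f)}\right)\le N\!\left(r,\tfrac{1}{(Q(f))^{(k+1)}}\right)+\varepsilon\,T(r,Q(f)).
\]
This is then combined with the full Milloux inequality (Lemma~\ref{lemma3.2}), whose $-N(r,1/(Q(f))^{(k+1)})$ term exactly cancels against the right-hand side above, yielding
\[
\big(q-l-\tfrac12\big)T(r,f)\le N\!\left(r,\tfrac{1}{(Q(f))^{(k)}-1}\right)+S(r,f).
\]
Yamanoi's inequality is doing the real work here; it is a deep second-main-theorem-type estimate for higher derivatives that goes well beyond the classical SMT and Milloux--Hayman.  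Your sketch would only recover a weaker range (roughly $q\ge (k+1)l+2$), not the sharp $q\ge l+1$.
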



 As a consequence,  when we consider $k=1$ and   $Q(z)=z^{n}$,  Hayman conjecture is obtained:

\begin{cor}[Hayman conjecture \cite{BE,H1,H,M} ] \label{corollary1} 
 If $f$ is a transcendental meromorphic function, then $f^nf'$ takes every finite non-zero complex value infinitely often, for any integer $n\ge 1$.
 \end{cor}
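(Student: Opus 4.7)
The plan is to obtain the corollary as an immediate specialization of Theorem \ref{th1}, since the statement of the theorem was designed precisely so that the monomial case falls out as a direct application. The only genuine work is to choose the polynomial $Q$ correctly and check that the hypothesis $q \ge l+1$ is satisfied in the right range of $n$.

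More concretely, I would set $Q(z) = z^{n+1}$. In the notation of the excerpt we then have $b = 1$, a single distinct root $\alpha_1 = 0$ of multiplicity $m_1 = n+1$, so $l = 1$ and $q = \deg Q = n+1$. The hypothesis of Theorem \ref{th1} is $q \ge l + 1$, which becomes $n+1 \ge 2$, i.e.\ exactly $n \ge 1$. This matches the range of $n$ claimed in the corollary.

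Next, applying Theorem \ref{th1} with this $Q$ and with $k = 1$, we conclude that
\[
[Q(f)]' = \bigl(f^{n+1}\bigr)' = (n+1)\, f^{n} f'
\]
takes every finite non-zero value infinitely often. Since $n + 1 \ne 0$, multiplication by the nonzero constant $n+1$ is a bijection of $\C^{*}$ onto itself, so $f^{n} f'$ itself must take every finite non-zero value infinitely often, which is the desired statement.

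There is essentially no obstacle here beyond the bookkeeping of identifying $q$ and $l$ for the monomial $Q(z) = z^{n+1}$; the substance of the argument is entirely contained in Theorem \ref{th1}. I would simply present the derivation in a few lines, emphasizing that the corollary shows how the classical Hayman conjecture fits inside the more general polynomial framework, the sharp boundary $n \ge 1$ in the corollary being forced by the sharp boundary $q \ge l+1$ in the theorem.
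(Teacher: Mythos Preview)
Your proposal is correct and essentially identical to the paper's own derivation: the paper also obtains the corollary by specializing Theorem~\ref{th1} with $k=1$ and a pure monomial $Q$, checking that the condition $q\ge l+1$ reduces to $n\ge 1$. The only cosmetic difference is that the paper phrases the choice as $Q(z)=z^{n}$ (and implicitly reindexes), whereas you take $Q(z)=z^{n+1}$ directly; either way the constant factor $n+1$ is harmless for hitting every nonzero value.
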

 
 More generally, for any $n\ge 2$:
 \begin{cor}\cite[Theorem 2]{BE}\label{corollary2} If $f$ is a transcendental meromorphic function, and if $n\ge 2$ and $k$ are positive integers, then $(f^n)^{(k)}$ takes every finite non-zero complex value infinitely often.
\end{cor}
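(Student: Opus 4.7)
The plan is to obtain Corollary 2 as an immediate specialization of Theorem 1. I would set $Q(z) = z^n$, so that in the factorization $Q(z) = b(z-\alpha_1)^{m_1}\cdots(z-\alpha_l)^{m_l}$ one has $b=1$, $l=1$, $\alpha_1=0$, and $m_1 = q = n$. The hypothesis $q \ge l+1$ of Theorem 1 then becomes $n \ge 2$, which is exactly the standing assumption of the corollary. Since $f$ is transcendental and $k$ is a positive integer, Theorem 1 applies and delivers that $[Q(f)]^{(k)} = (f^n)^{(k)}$ takes every finite non-zero complex value infinitely often.

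Because the derivation is a one-line substitution, there is essentially no obstacle to carrying it out; the genuine content lies entirely inside Theorem 1. The two verifications needed are purely bookkeeping: first, that with $Q(z)=z^n$ the polynomial has exactly $l=1$ distinct root of multiplicity $n$, so that $q = n$; and second, that $q \ge l+1$ then reduces to $n \ge 2$. It is worth remarking that the numerical threshold $q \ge l+1$ in Theorem 1 is sharp enough in the monomial case $l=1$ to reproduce exactly the Bergweiler--Eremenko bound $n \ge 2$ from \cite[Theorem 2]{BE}, with no loss at the endpoint. Thus the full strength of Corollary 2 is recovered without any additional argument beyond naming the correct $Q$.
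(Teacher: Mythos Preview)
Your proposal is correct and matches the paper's intended derivation exactly: Corollary~\ref{corollary2} is stated in the paper as an immediate consequence of Theorem~\ref{th1} via the substitution $Q(z)=z^n$, giving $l=1$, $q=n$, so that the hypothesis $q\ge l+1$ becomes $n\ge 2$. No separate proof is given in the paper beyond this specialization, and your bookkeeping verifications are precisely what is needed.
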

 
 Note that in  \cite[Theorem 2]{BE}, the authors need $n>k$. However, in Corollary~\ref{corollary2}, we only need    $n\ge 2$.
 
 \begin{cor} Let $k, l$ be positive integers and $a_1,\dots, a_l$ be complex numbers.
If $f$ is a transcendental meromorphic function  then $[(f-a_1)^{2}(f-a_2)\dots(f-a_l)]^{(k)}$ takes every finite non-zero value infinitely often.
\end{cor}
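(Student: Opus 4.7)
The plan is to obtain this corollary as a direct application of Theorem~\ref{th1}. I would set
\[
Q(z) = (z-a_1)^{2}(z-a_2)\cdots(z-a_l),
\]
so that $Q(f)$ matches the expression appearing in the corollary, and then simply invoke Theorem~\ref{th1} to conclude that $[Q(f)]^{(k)}$ assumes every finite non-zero value infinitely often.

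The only verification required is that the hypothesis $q \ge l+1$ of Theorem~\ref{th1} holds for this $Q$. There is a notational clash to keep track of: the ``$l$'' in Theorem~\ref{th1} counts the \emph{distinct} zeros of $Q$, whereas the ``$l$'' in the corollary counts the number of factors written out. To avoid confusion, let me temporarily write $l'$ for the number of distinct roots of $Q$. Then
\[
\deg Q \;=\; 2 + \underbrace{1 + \cdots + 1}_{l-1} \;=\; l+1,
\]
and since the $a_j$'s may or may not coincide one has $l' \le l$. Hence $q = l+1 \ge l'+1$, which is precisely the hypothesis required, and Theorem~\ref{th1} applies.

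The main, and essentially only, ``obstacle'' is disentangling this notational overlap between the two statements; once it is clarified, no further work is needed. Note that the role of the squared factor $(f-a_1)^{2}$ is exactly to ensure that $\deg Q$ strictly exceeds the number of distinct roots of $Q$, so that Theorem~\ref{th1}'s hypothesis is satisfied regardless of which---if any---of the $a_j$'s happen to coincide.
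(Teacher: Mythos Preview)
Your proposal is correct and matches the paper's intended approach: the corollary is stated without proof precisely because it is an immediate application of Theorem~\ref{th1} with $Q(z)=(z-a_1)^2(z-a_2)\cdots(z-a_l)$. Your handling of the notational clash---distinguishing the number of written factors from the number of distinct roots---is exactly the verification needed, and your observation that the squared factor is what guarantees $q\ge l'+1$ regardless of coincidences among the $a_j$ is the right point to make.
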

\begin{theorem}\label{th3} Let $P$ and $Q$ be polynomials of degree $p$ and $q$ respectively, and $k\geq 1$ be a positive integer. Let $f$ be a transcendental meromorphic function.
  If $q\geq (k+1)p+l+2$ then $Q(f)+P(f^{(k)})$ has infinitely many zeros.

\end{theorem}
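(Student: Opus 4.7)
The plan is a proof by contradiction: assume $F := Q(f) + P(f^{(k)})$ has only finitely many zeros, so $N(r, 1/F) = O(\log r) = S(r, f)$, and we aim to contradict the transcendence of $f$.

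First, because $q \geq (k+1)p + l + 2 > (k+1)p$, at any pole of $f$ of order $s \geq 1$ the pole of $Q(f)$ has order $qs$ and strictly dominates the pole of order $p(s + k)$ of $P(f^{(k)})$, so $F$ has a pole of order exactly $qs$. Hence $F$ and $Q(f)$ share the same poles with the same multiplicities, and $N(r, F) = q\, N(r, f)$. Combining this with Valiron-Mohon'ko $T(r, Q(f)) = q\, T(r, f) + O(1)$, the lemma on logarithmic derivatives $T(r, P(f^{(k)})) \leq (k+1)p\, T(r, f) + S(r, f)$, and the identity $Q(f) = F - P(f^{(k)})$ yields the fundamental lower bound
\[
(q - (k+1)p)\, T(r, f) \leq T(r, F) + S(r, f). \quad (\ast)
\]

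The key step is to introduce the auxiliary function $\eta := P(f^{(k)})/Q(f)$, so that $F = Q(f)(1 + \eta)$ and $F$ vanishes at a point $z_0$ with $Q(f)(z_0) \neq 0$ if and only if $\eta(z_0) = -1$. By assumption, $\eta$ takes the value $-1$ only $S(r, f)$-often. An essential additional observation, flowing from the same pole-dominance argument, is that at every pole of $f$ of order $s$, the quotient $\eta$ vanishes to order $(q - p)s - pk \geq l + 2$. We apply Nevanlinna's second main theorem to $\eta$ with the three target values $0, -1, \infty$, bounding $\overline{N}(r, 1/(\eta+1)) = S(r, f)$ by the assumption, $\overline{N}(r, \eta) \leq \overline{N}(r, 1/Q(f))$, and $\overline{N}(r, 1/\eta)$ by the distinct zeros of $\eta$ at poles of $f$ and zeros of $P(f^{(k)})$. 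Combined with the second main theorem applied to $f$ at the $l + 1$ distinct target values $\alpha_1, \ldots, \alpha_l, \infty$, we derive after careful bookkeeping an estimate that, substituted into $(\ast)$, yields $(q - (k+1)p - l - 1)\, T(r, f) \leq S(r, f)$.

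Because $q \geq (k+1)p + l + 2$, the coefficient on the left is at least $1$, which forces $T(r, f) = S(r, f)$ and contradicts the transcendence of $f$. The principal technical obstacle is the bookkeeping of proximity and counting contributions in the previous paragraph: one must exploit both the large-order vanishing of $\eta$ at each pole of $f$ (provided by the gap $q - (k+1)p$) and the Nevanlinna second main theorem for $f$ in order to absorb the otherwise dominant contribution $q\, N(r, f)$ coming from the poles of $F$; this is precisely the point where the hypothesis $q \geq (k+1)p + l + 2$ enters in its full strength.
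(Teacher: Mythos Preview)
Your opening observations (pole dominance, $N(r,F)=qN(r,f)$, the inequality $(\ast)$, and the high-order vanishing of $\eta=P(f^{(k)})/Q(f)$ at poles of $f$) are all correct, but the crucial paragraph that follows is only a promise, and I do not see how to redeem it.  Applying the second main theorem to $\eta$ at $0,-1,\infty$ produces
\[
T(r,\eta)\le \overline N(r,\eta)+\overline N\!\left(r,\tfrac1\eta\right)+\overline N\!\left(r,\tfrac1{\eta+1}\right)+S(r,\eta),
\]
and your own description of $\overline N(r,1/\eta)$ forces the term $\overline N\bigl(r,1/P(f^{(k)})\bigr)$, which in general is only bounded by $h\,T(r,f^{(k)})\le h(k+1)T(r,f)$ with $h$ the number of \emph{distinct} roots of $P$.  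The hypothesis $q\ge (k+1)p+l+2$ contains no information about $h$, so this contribution cannot be absorbed.  Nor does the second main theorem for $f$ at $\alpha_1,\dots,\alpha_l,\infty$ help: it bounds $\sum_i\overline N(r,1/(f-\alpha_i))+\overline N(r,f)$ from \emph{below}, which is the wrong direction.  Finally, $(\ast)$ cannot be used as you suggest: you would need an \emph{upper} bound $T(r,F)\le (\text{something}<q-(k+1)p)\,T(r,f)$, but already $T(r,F)\ge N(r,F)=qN(r,f)$, and if $N(r,f)\sim T(r,f)$ no such bound exists.  So the ``careful bookkeeping'' step hides a genuine gap.

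For comparison, the paper bypasses all of these issues with a Clunie-type argument.  It sets
\[
R=\frac{[Q(f)]'}{Q(f)}-\frac{F'}{F},\qquad H=P(f^{(k)})\Big(\frac{F'}{F}-\frac{[P(f^{(k)})]'}{P(f^{(k)})}\Big),
\]
so that $m(r,R)=S(r,f)$ (difference of logarithmic derivatives) and the algebraic identity $Q(f)R=H$ holds.  One checks directly that $m(r,H)\le p\,m(r,f)+S(r,f)$, whence $(q-p)m(r,f)\le m(r,1/R)+S(r,f)$.  Since $R$ has only simple poles, located at zeros of $F$ and of $Q(f)$, one gets $N(r,R)\le \overline N(r,1/F)+lT(r,f)$; and at each pole of $f$ the identity $Q(f)R=H$ forces $R$ to vanish to order at least $(q-p)s-kp-1$, giving $N(r,1/R)\ge (q-p)N(r,f)-(kp+1)\overline N(r,f)$.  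Combining these via $m(r,1/R)=N(r,R)-N(r,1/R)+S(r,f)$ yields
\[
\bigl(q-(k+1)p-l-1\bigr)T(r,f)\le \overline N\!\left(r,\tfrac1F\right)+S(r,f),
\]
which is exactly the estimate you were aiming for.  Note that in your notation $R=-\eta'/(1+\eta)$, so the two auxiliary functions are closely related; the point is that working with the logarithmic derivative $R$ rather than with $\eta$ itself lets one use $m(r,R)=S(r,f)$ to control the proximity side and the identity $Q(f)R=H$ to control the counting side, without ever encountering the uncontrolled term $\overline N(r,1/P(f^{(k)}))$.
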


 In the special case that $P(z)=z$  and  $Q(z)=-az^{n}+b$ where $a\ne 0,b$ are constants, we recover many known results, for
example the results in \cite{H1}, as special cases of our result.
\begin{cor}\cite[Theorem 9]{H1}\label{corollary4} If $f$ is a transcendental meromorphic function, $n\ge 5$ and $a\ne 0$, then $f'(z)-af(z)^n$ takes every finite complex value infinitely often.
\end{cor}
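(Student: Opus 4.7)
The plan is to derive Corollary~\ref{corollary4} as a direct specialization of Theorem~\ref{th3}. Saying that $f'(z) - af(z)^n$ attains every finite complex value $c$ infinitely often is the same as saying that $f'(z) - af(z)^n - c$ has infinitely many zeros for each $c \in \C$, so the task reduces to fitting this expression into the form $Q(f) + P(f^{(k)})$ governed by Theorem~\ref{th3}.

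I would take $k = 1$, set $Q(z) = -az^n$, and set $P(z) = z - c$. Then $Q(f) + P(f') = -af^n + f' - c$, exactly the quantity whose zero set we want to analyze. The degrees are $q = n$ and $p = 1$. The critical bookkeeping point is to place the additive constant $-c$ inside $P$ rather than inside $Q$: this way $Q(z) = -az^n$ has only the single distinct root $z = 0$, giving $l = 1$, whereas absorbing $-c$ into $Q$ would produce $l = n$ whenever $c \neq 0$ and render the hypothesis of Theorem~\ref{th3} unreachable.

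With this choice, the condition $q \geq (k+1)p + l + 2$ required by Theorem~\ref{th3} reduces to $n \geq 2 \cdot 1 + 1 + 2 = 5$, which is precisely the hypothesis of the corollary. Theorem~\ref{th3} then yields infinitely many zeros of $f' - af^n - c$ for every $c \in \C$, proving the corollary.

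The only real subtlety, rather than a genuine obstacle, is the choice of how to distribute the constant $c$ between $P$ and $Q$. Once $c$ is assigned to $P$, keeping $l = 1$ optimal, the result is an immediate substitution into Theorem~\ref{th3}, and the threshold $n \geq 5$ emerges without any additional analytic work.
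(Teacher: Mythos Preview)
Your proof is correct and follows the paper's approach: the corollary is presented (without a separate proof) immediately after the remark that Theorem~\ref{th3} specializes with $P(z)=z$ and $Q(z)=-az^{n}+b$. Your decision to place the additive constant inside $P$ rather than inside $Q$ is in fact the careful way to carry this out, since the paper's literal choice $Q(z)=-az^{n}+b$ has $l=n$ whenever $b\neq 0$ and would not satisfy $q\geq (k+1)p+l+2$; your version keeps $l=1$ and makes the derivation go through cleanly at exactly the threshold $n\geq 5$.
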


\begin{theorem}\label{th2} Let $P$ and $Q$ be polynomials of degree $p$ and $q$ respectively. Let $\alpha\ne 0$ be a small function with respect to $f$.
 If $f$ is a transcendental meromorphic function of finite order and $p-q-h-2l-2>0$, then $P(f)Q(f(z+c_1)+c_2f(z))-\alpha$ has infinitely many zeros, for any $c_1,c_2\in \C$.

In particular, if $f$ is a transcendental entire function of finite order and $p-h-l>0$, then $P(f)Q(f(z+c_1)+c_2f(z))-\alpha$ has infinitely many zeros, for any $c_1,c_2\in \C$.
\end{theorem}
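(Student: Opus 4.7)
The plan is to argue by contradiction, combining Nevanlinna's Second Main Theorem with the difference-Nevanlinna machinery available for finite-order $f$, in order to handle the shift term $g(z) := f(z+c_1) + c_2 f(z)$. Suppose for contradiction that $\Phi := P(f)Q(g) - \alpha$ has only finitely many zeros; then $\overline{N}(r, 1/\Phi) = O(\log r) = S(r, f)$. The first step is to gather shift estimates from the finite-order hypothesis on $f$: the Chiang--Feng shift lemma delivers $T(r, f(z+c_1)) = T(r, f) + S(r, f)$, while the Halburd--Korhonen difference analogue of the logarithmic derivative lemma yields $m(r, f(z+c_1)/f(z)) = S(r, f)$. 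Writing $g = f \cdot (c_2 + f(z+c_1)/f)$, I deduce $T(r, g) \le 2T(r, f) + S(r, f)$ and $\overline{N}(r, g) \le 2\overline{N}(r, f) + S(r, f)$ in general, with the sharper bound $T(r, g) \le T(r, f) + S(r, f)$ in the entire case.

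Next I exploit the algebraic identity
\[
P(f) - \frac{\alpha}{Q(g)} \;=\; \frac{\Phi}{Q(g)}.
\]
A case-by-case order analysis at the poles of $Q(g)$ (separating whether the point is a pole of $f$ only, of $f(z+c_1)$ only, or joint) shows that the zeros of $\Phi/Q(g)$, counted without multiplicity, come only from the finitely many zeros of $\Phi$, giving $\overline{N}(r, 1/(P(f) - \alpha/Q(g))) = S(r, f)$. The main inequality then comes from applying the Second Main Theorem to $P(f)$ against the three targets $0$, $\infty$, and $\alpha/Q(g)$, in a moving-targets formulation (Yamanoi's small-target SMT when it is legal, otherwise the Steinmetz--Ru--Vojta version with explicit error $O(T(r, \alpha/Q(g))) = O(q\, T(r, g))$). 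Combining this with $T(r, P(f)) = p\, T(r, f) + O(1)$ (Valiron--Mokhon'ko), $\overline{N}(r, 1/P(f)) \le h\, T(r, f) + S(r, f)$ (as $P$ has $h$ distinct roots), and $\overline{N}(r, P(f)) \le \overline{N}(r, f) + S(r, f)$, I expect to reach a linear inequality of the shape $(p - q - h - 2l - 2)T(r, f) \le S(r, f)$ in the meromorphic case and $(p - h - l)T(r, f) \le S(r, f)$ in the entire case, producing the contradiction.

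The principal obstacle is the moving-target step: the function $\alpha/Q(g)$ has characteristic comparable to $T(r, f)$, not strictly $o(T(r, f))$, so a naïve application of SMT is unavailable and one must quantify and absorb the resulting error. This is precisely what produces the coefficient $q$ in the meromorphic hypothesis -- through the error $q\, T(r, g) \le 2q\, T(r, f)$ -- and, via a parallel count of $\overline{N}(r, 1/Q(g)) \le l\, T(r, g)$, contributes to the $2l$. In the entire case the sharper shift estimate $T(r, g) \le T(r, f) + S(r, f)$ together with $\overline{N}(r, P(f)) = 0$ suppresses these losses and gives the cleaner hypothesis $p - h - l > 0$. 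A secondary subtlety is the pole-order cancellation identity above, which requires that $\alpha$ be a genuine small function (so its own zeros and poles are absorbed in $S(r, f)$) and a careful split of the poles of $P(f)Q(g)$ between poles of $f$ (feeding $P(f)$) and poles of $f(z+c_1)$ (feeding $Q(g)$) via the Chiang--Feng shift.
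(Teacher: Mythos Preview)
Your central step---applying the Second Main Theorem to $P(f)$ against the three targets $0,\infty,\alpha/Q(g)$---does not go through as stated. The target $\alpha/Q(g)$ has characteristic comparable to $T(r,P(f))$ (roughly $qT(r,g)$ versus $pT(r,f)$), so neither Yamanoi's small-function SMT nor the Steinmetz--Osgood--Ru moving-target SMT applies: those results require the targets to be genuinely $o(T(r,F))$. There is no standard version of SMT that delivers the inequality you want with an additive error of size $O(T(r,\alpha/Q(g)))$ when the target is this large; indeed such a statement would be false in general. Your own bookkeeping does not recover the constants $(q,2l)$ in the hypothesis either---you invoke $\overline{N}(r,1/Q(g))\le l\,T(r,g)$, but zeros of $Q(g)$ never enter an SMT applied to $P(f)$. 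The subsidiary claim $\overline{N}\bigl(r,1/(P(f)-\alpha/Q(g))\bigr)=S(r,f)$ is also not correct in the meromorphic case: at a pole of $g$ that is not a pole of $f$, the function $P(f)-\alpha/Q(g)$ equals $P(f)$, which may well vanish there.

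The paper sidesteps the moving-target issue entirely by applying the ordinary SMT (with small-function target $\alpha$) to the \emph{product} $G=P(f)\,Q(g)$ itself, for the triple $0,\infty,\alpha$. This gives
\[
T(r,G)\le \overline{N}(r,G)+\overline{N}\Bigl(r,\tfrac1G\Bigr)+\overline{N}\Bigl(r,\tfrac1{G-\alpha}\Bigr)+S(r,f),
\]
and the right side is then controlled by $(h+2l+2)T(r,f)+\overline{N}(r,1/(G-\alpha))$ using only crude estimates for poles and zeros of $G$. The work is pushed into a \emph{lower} bound for $T(r,G)$, obtained from the algebraic identity
\[
\frac{1}{f^{q}P(f)}=\frac{1}{G}\,\prod_{i=1}^{l}\Bigl(\frac{g-\alpha_i}{f}\Bigr)^{m_i},
\]
which after Valiron--Mokhon'ko and the difference lemma yields $T(r,G)\ge (p-q)T(r,f)+O(1)$ in the meromorphic case and $T(r,G)\ge p\,T(r,f)+S(r,f)$ in the entire case. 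Combining the two inequalities produces exactly $(p-q-h-2l-2)T(r,f)\le \overline{N}(r,1/(G-\alpha))+S(r,f)$ and its entire analogue. The key idea you are missing is to apply SMT to $G$ rather than to $P(f)$: this makes the target $\alpha$ legitimately small, and the price---needing a lower bound on $T(r,G)$---is paid via the identity above.
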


 \begin{cor}\cite[Theorem 1.2]{LCL}\label{corollary3} Suppose that $f$ is a transcendental meromorphic function, $\alpha$ is a small function with respect to $f$ and  $n,s$ are integers. If $n\ge s+6$, then $f(z)^n(f(z+c)-f(z))^s-\alpha$ takes every finite non-zero complex value infinitely often.
\end{cor}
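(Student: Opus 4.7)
The plan is to obtain Corollary~\ref{corollary3} as a direct specialization of Theorem~\ref{th2}. First, I would choose the polynomials $P(z)=z^n$ and $Q(z)=z^s$, together with the parameters $c_1=c$ and $c_2=-1$. With these choices,
\begin{equation*}
P(f)\,Q\bigl(f(z+c_1)+c_2 f(z)\bigr)=f(z)^n\bigl(f(z+c)-f(z)\bigr)^s,
\end{equation*}
so the function whose zeros we wish to count in the corollary is exactly the one appearing in Theorem~\ref{th2}.

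Next, I would read off the numerical invariants in the notation fixed just before Theorem~\ref{th1}. The polynomial $P(z)=z^n$ has degree $p=n$ with a single distinct root (at the origin), so $h=1$; similarly $Q(z)=z^s$ has degree $q=s$ and one distinct root, giving $l=1$. Substituting into the meromorphic hypothesis $p-q-h-2l-2>0$ of Theorem~\ref{th2} yields $n-s-1-2-2>0$, that is $n\ge s+6$, which is precisely the hypothesis of the corollary.

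Finally, assuming $f$ has finite order (as required by Theorem~\ref{th2}, and part of the original hypothesis in \cite{LCL}), the theorem applies and asserts that
\begin{equation*}
P(f)\,Q\bigl(f(z+c_1)+c_2 f(z)\bigr)-\alpha=f(z)^n\bigl(f(z+c)-f(z)\bigr)^s-\alpha
\end{equation*}
has infinitely many zeros, which is exactly the conclusion of Corollary~\ref{corollary3}. There is essentially no obstacle in this derivation: the argument is a one-line instantiation, and the sharp threshold $n\ge s+6$ of Liu--Chen--Liu is recovered exactly, confirming that no slack is lost by passing through the general Theorem~\ref{th2}. The only point deserving comment is the verification that the correspondence $P=z^n$, $Q=z^s$ translates the abstract parameters $(p,q,h,l)$ into $(n,s,1,1)$, since the strength of the corollary hinges entirely on counting the distinct roots of $P$ and $Q$ correctly.
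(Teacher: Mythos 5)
Your specialization $P(z)=z^n$, $Q(z)=z^s$, $c_1=c$, $c_2=-1$, with $(p,q,h,l)=(n,s,1,1)$, turning $p-q-h-2l-2>0$ into $n\ge s+6$, is exactly the intended derivation of this corollary from Theorem~\ref{th2} (the paper gives no separate proof, presenting it as an immediate consequence). Your remark that the finite-order hypothesis must be carried over from Theorem~\ref{th2} is also apt, since the corollary as printed omits it even though the underlying difference-analogue lemmas require it.
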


 {\it Acknowledgments.} A part of this article was written while the first name
author was visiting  Vietnam Institute for Advanced Study in Mathematics (VIASM).  She would like to thank the institute for warm hospitality and partial support. The first name author also would like to thank the International Centre for Research and Postgraduate Training in Mathematics (ICRTM, grant number ICRTM01-2020.05) for a part of support.

\section{Proof of theorems}
Let  $f$ be a meromorphic function on  the complex plane $\C.$ We use standard notations, definitions and results of Nevanlinna theory in \cite{H, R}. We denote by $S(r,f)$ any function satisfying $S(r,f)=o(T(r,f))$ as $r\rightarrow +\infty$ outside of a possible exceptional set with finite measure.  
 We first recall the following lemmas

\begin{lemma}\cite[Theorem 2.1]{CF} \label{lemma3.5}   Let $f(z)$ be a transcendental meromorphic function of finite order. Then, for any $c\in \C$,
\begin{itemize}\item[(i)]  $m\Big(r,\frac{f(z+c)}{f(z)}\Big )=S(r,f)$ for all $r$ outside of a set of finite logarithmic measure.
\item[(ii)]T(r,f(z+c))=T(r,f(z))+S(r,f).
\end{itemize}
\end{lemma}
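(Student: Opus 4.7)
The plan is to follow the Chiang--Feng approach, deriving a difference analogue of the logarithmic derivative lemma from the Poisson--Jensen formula. First I would apply Poisson--Jensen on a circle of radius $R$ (with $R$ a suitable multiple of $r=|z|$, e.g.\ $R=2r+2|c|$) to express $\log|f(w)|$ at both $w=z$ and $w=z+c$ as a sum over the zeros $\{a_\mu\}$ and poles $\{b_\nu\}$ of $f$ inside $|w|<R$ plus a Poisson boundary integral. Subtracting the two expansions makes the boundary integrals combine into a single quantity controlled by its mean-value modulus of continuity; a direct estimate of the Poisson kernel difference produces a factor $|c|/R$, yielding a boundary contribution of size $O\bigl(|c|(T(R,f)+\log R)/R\bigr)$.

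Next I would handle the zero/pole sum. For each $a_\mu$ the difference between the Blaschke-type term at $z$ and at $z+c$ is bounded, away from a shrinking neighbourhood of $a_\mu$, by $C|c|/|z-a_\mu|$ up to logarithmic error; the same holds for each $b_\nu$. Integrating over $\theta=\arg z$ with $|z|=r$ and invoking a standard Cartan/Borel covering lemma to discard the small-divisor contributions gives
\begin{equation*}
m\!\Big(r,\tfrac{f(z+c)}{f(z)}\Big)\;=\;O\!\Big(\tfrac{|c|\,(T(R,f)+\log R)}{R} + \tfrac{(n(R,f)+n(R,1/f))\log R}{R}\Big),
\end{equation*}
valid for $r$ outside an exceptional set of finite logarithmic measure (absorbing the small-divisor $r$-set via a Borel growth lemma). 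Now the finite-order hypothesis gives $T(R,f)=O(R^{\rho+\varepsilon})$ and $n(R,\cdot)=O(R^{\rho+\varepsilon})$, so choosing $R=2r$ turns the right side into $O(r^{\rho-1+\varepsilon})=S(r,f)$. This establishes (i).

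For (ii), I would deduce the equality from (i) together with the first fundamental theorem. Splitting $T(r,f(z+c))=m(r,f(z+c))+N(r,f(z+c))$, the proximity is controlled by
\begin{equation*}
m(r,f(z+c))\;\le\;m(r,f(z))+m\!\Big(r,\tfrac{f(z+c)}{f(z)}\Big),
\end{equation*}
and the counting function is controlled by the observation that the poles of $f(z+c)$ are the translates by $-c$ of the poles of $f$, so $N(r,f(z+c))\le N(r+|c|,f)$. Together with $T(r+|c|,f)=T(r,f)+S(r,f)$ (a consequence of the finite-order growth estimate via monotonicity of $T(r,f)$) this yields $T(r,f(z+c))\le T(r,f)+S(r,f)$; the reverse inequality follows by applying the same reasoning with $c$ replaced by $-c$ to the function $g(z):=f(z+c)$.

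The main obstacle is the uniform in $\theta$ control of the zero/pole sum when $z$ approaches an $a_\mu$ or $b_\nu$: the small-divisor contribution must be excluded by a Cartan-type covering estimate, and the resulting exceptional $r$-set must then be shown, via a Borel lemma applied to the finite-order majorant, to have finite logarithmic measure. Everything else in the argument is bookkeeping around the Poisson--Jensen formula and the first main theorem.
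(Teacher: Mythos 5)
The paper offers no proof of this lemma at all: it is quoted verbatim from Chiang--Feng \cite[Theorem 2.1]{CF} and used as a black box, so there is no internal argument to compare against. Your sketch is a faithful reconstruction of the published Chiang--Feng route (Poisson--Jensen at radius $R$, a $|c|/R$ gain from the difference of Poisson kernels, Blaschke-type sums over zeros and poles controlled by a Cartan covering estimate, and a Borel lemma for the exceptional set), and part (ii) is correctly reduced to part (i) plus $N(r,f(z+c))\le N(r+|c|,f)$ and the symmetric argument for $g(z)=f(z+c)$.

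There is, however, one genuine gap: the final inference ``$O(r^{\rho-1+\varepsilon})=S(r,f)$.'' This is not automatic. If $f$ has lower order $\mu<\rho-1$, then $T(r,f)$ can fall below $r^{\rho-1+\varepsilon}$ on a set of \emph{infinite} logarithmic measure (take $T(r_n,f)\le r_n^{\mu+\varepsilon}$ along a sparse sequence; by monotonicity $T(r,f)\le r_n^{\mu+\varepsilon}$ on all of $[c\,r_n^{(\mu+\varepsilon)/(\rho-1+\varepsilon)},r_n]$, whose logarithmic measures sum to infinity). So the explicit power bound, while correct and exactly what Chiang--Feng prove, does not by itself yield $o(T(r,f))$ outside a set of finite logarithmic measure. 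To obtain the lemma in the form stated you must keep the estimate in terms of $T(R,f)$ itself and apply the Borel-type growth lemma to $T(\cdot,f)$ (the Halburd--Korhonen formulation: for finite order, $T(r+s,f)\le \alpha T(r,f)$ outside a set of finite logarithmic measure), rather than passing to the polynomial majorant $R^{\rho+\varepsilon}$. The same remark applies to your appeal to $T(r+|c|,f)=T(r,f)+S(r,f)$ in part (ii), which rests on exactly this Borel-type lemma and not merely on monotonicity. With that substitution the argument closes; everything else in your outline is the standard bookkeeping.
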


\begin{lemma} \label{lemma3.2} (Milloux, see Hayman \cite[Theorem 3.2]{H}) For $k\ge 1,$
\begin{equation*}T(r,f)\leq \overline{N}(r,f)+N\Big(r,\dfrac{1}{f}\Big)+N\Big(r,\dfrac{1}{f^{(k)}-1}\Big)-N\Big(r,\dfrac{1}{f^{(k+1)}}\Big)+S(r,f).
\end{equation*}
\end{lemma}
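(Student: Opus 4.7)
The plan is to prove the Milloux inequality by reducing a bound on $T(r,f)$ to a proximity bound on $1/f$, and then threading that proximity through $f^{(k)}$ and $f^{(k+1)}$ via the Logarithmic Derivative Lemma. By the First Main Theorem, $T(r,f)=m(r,1/f)+N(r,1/f)+O(1)$, so it suffices to estimate $m(r,1/f)$. Writing $1/f=(f^{(k)}/f)(1/f^{(k)})$ and applying the Logarithmic Derivative Lemma to $f^{(k)}/f$ yields $m(r,1/f)\le m(r,1/f^{(k)})+S(r,f)$. Setting $\phi=f^{(k)}-1$ so that $\phi'=f^{(k+1)}$, the identity $1/f^{(k)}=1-\phi/f^{(k)}$ together with the factorization $\phi/f^{(k)}=(\phi/\phi')(f^{(k+1)}/f^{(k)})$ reduces the task further to bounding $m(r,\phi/\phi')$, since $m(r,f^{(k+1)}/f^{(k)})=S(r,f)$.

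The core computation is then to show
$$m(r,\phi/\phi') = N\Big(r,\frac{1}{f^{(k)}-1}\Big) + \overline{N}(r,f) - N\Big(r,\frac{1}{f^{(k+1)}}\Big) + S(r,f).$$
For this, the Logarithmic Derivative Lemma gives $m(r,\phi'/\phi)=S(r,f)$, while the poles of $\phi'/\phi$ are all simple and lie precisely at the zeros of $\phi$ and the poles of $f$; consequently $T(r,\phi'/\phi)=\overline{N}(r,1/\phi)+\overline{N}(r,f)+S(r,f)$. Since $T(r,\phi/\phi')=T(r,\phi'/\phi)+O(1)$ by the First Main Theorem, one has $m(r,\phi/\phi')=T(r,\phi/\phi')-N(r,\phi/\phi')$, and it remains only to evaluate $N(r,\phi/\phi')$: at a zero of $\phi$ of order $s$, the function $\phi'$ vanishes to order $s-1$, so $\phi/\phi'$ acquires a zero rather than a pole there, whereas every other zero of $\phi'$ contributes its full multiplicity to $N(r,\phi/\phi')$. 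This bookkeeping yields $N(r,\phi/\phi')=N(r,1/\phi')-\bigl[N(r,1/\phi)-\overline{N}(r,1/\phi)\bigr]$, and upon substitution the two $\overline{N}(r,1/\phi)$ contributions cancel, producing exactly the displayed formula.

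The main obstacle is the multiplicity accounting in this last step: one must track carefully the interaction between multiple zeros of $\phi=f^{(k)}-1$ and the corresponding (lower-order) zeros of $\phi'=f^{(k+1)}$, because it is precisely this cancellation that upgrades the $\overline{N}$ emerging from the simple poles of $\phi'/\phi$ to the unbarred $N(r,1/(f^{(k)}-1))$ appearing in the target inequality, and that produces the negative correction $-N(r,1/f^{(k+1)})$. Assembling the chain of proximity bounds and combining with $T(r,f)=m(r,1/f)+N(r,1/f)+O(1)$ gives the stated Milloux inequality.
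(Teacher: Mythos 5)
The paper gives no proof of this lemma; it is stated as a quoted classical result (Milloux's inequality, Theorem 3.2 in Hayman's monograph), so there is no in-paper argument to compare against. Your proposal is a correct and complete reconstruction of the standard Milloux--Hayman proof: the chain $m(r,1/f)\le m(r,1/f^{(k)})+S(r,f)\le m(r,\phi/\phi')+S(r,f)$ with $\phi=f^{(k)}-1$, followed by the evaluation of $m(r,\phi/\phi')$ via $T(r,\phi'/\phi)=\overline{N}(r,1/\phi)+\overline{N}(r,f)+S(r,f)$ together with the multiplicity bookkeeping $N(r,\phi/\phi')=N(r,1/\phi')-\bigl[N(r,1/\phi)-\overline{N}(r,1/\phi)\bigr]$, is exactly the classical route, and every step checks out; the only implicit hypothesis you should record is that $f$ is transcendental (as in the paper's applications), so that $\phi\not\equiv 0$ and $\phi'\not\equiv 0$ and the divisions are legitimate.
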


Recently, Yamanoi \cite{Y1} proved the following result.

\begin{lemma}\label{lemma3.3}\cite{Y1} Let $f$ be a transcendental meromorphic function in the complex plane, $k\ge 1$ be an integer, and $\epsilon>0;$ let $A\subset \C$ be a finite set of complex numbers. Then we have 
\begin{equation*}
k\overline{N}(r,f)+\sum_{a\in A} N_1\Big(r,\dfrac{1}{f-a}\Big)\leq N\Big(r,\dfrac{1}{f^{(k+1)}}\Big )+\epsilon T(r,f),
\end{equation*}
for all $r>e$ outside a set $E\subset (e,\infty )$ of logarithmic density $0.$ Here, $E$ depends on $f,k,\epsilon$ and $A,$ and $$N_1\Big(r,\dfrac{1}{f-a}\Big)=N\Big(r,\dfrac{1}{f-a}\Big)-\overline{N}\Big(r,\dfrac{1}{f-a}\Big).$$
\end{lemma}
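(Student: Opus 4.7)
The plan is to follow Yamanoi's approach in \cite{Y1}, which proves this as a refined Second Main Theorem by geometric rather than purely function-theoretic means.

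The starting point is an elementary local count. At a pole of $f$ of multiplicity $m \ge 1$, $f^{(k+1)}$ has a pole, contributing $0$ to $N(r, 1/f^{(k+1)})$ but $k$ to $k\overline{N}(r,f)$. At a zero of $f - a$ of multiplicity $m \ge 1$ (with $a \in A$), the derivative $(f-a)^{(k+1)} = f^{(k+1)}$ vanishes there to order at least $\max(m-k-1, 0)$, while $N_1(r, 1/(f-a))$ gains $m - 1$. Since zeros of $f-a_1$, $f-a_2$ (for $a_1\ne a_2$) and poles of $f$ are pairwise disjoint, the pointwise inequality $m - 1 \le \max(m-k-1, 0) + k$ sums to
\begin{equation*}
k\overline{N}(r,f) + \sum_{a \in A} N_1\Big(r, \tfrac{1}{f-a}\Big) \le N\Big(r, \tfrac{1}{f^{(k+1)}}\Big) + k\Big(\overline{N}(r,f) + \sum_{a \in A} \overline{N}\Big(r, \tfrac{1}{f-a}\Big)\Big).
\end{equation*}
This has the correct shape, but with a hopelessly large error term.

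The main obstacle is that the parenthesized error is generically comparable to $T(r,f)$ rather than bounded by $\epsilon T(r,f)$. Nevanlinna's Second Main Theorem only bounds that sum from above by $O(T(r,f))$, so it offers no route to the sharp estimate. Yamanoi's strategy is to abandon Nevanlinna bookkeeping and work on the covering Riemann surface of $f$, where each $\overline{N}(r, 1/(f-a))$ becomes a branching count that can be averaged over $a \in A$ and over $r$. Through an Ahlfors--Shimizu length--area inequality combined with a careful removal of exceptional radii, the error $k\bigl[\overline{N}(r,f) + \sum_{a \in A} \overline{N}(r, 1/(f-a))\bigr]$ can be replaced by $\epsilon T(r,f)$, at the cost of discarding an $r$-set of logarithmic density zero (depending on $f$, $k$, $\epsilon$, and $A$).

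Since the present note uses the inequality only as a black box, I would simply invoke \cite{Y1}. The passage from the naive pointwise error $O(T(r,f))$ to the sharp error $\epsilon T(r,f)$ is the deep content of Yamanoi's theorem and is the sole real obstacle; reproducing it would require the full geometric machinery of \cite{Y1}, which is well beyond the scope of the present application.
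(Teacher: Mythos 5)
The paper offers no proof of this lemma either---it is stated purely as a citation to Yamanoi \cite{Y1}---and your proposal ultimately does the same thing, correctly identifying that the reduction of the error term from $k\bigl[\overline{N}(r,f)+\sum_{a\in A}\overline{N}\bigl(r,\tfrac{1}{f-a}\bigr)\bigr]$ to $\epsilon T(r,f)$ is the deep content that must be imported from \cite{Y1}. Your preliminary pointwise count is correct as far as it goes, and treating the lemma as a black box is exactly what the paper does.
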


\begin{proof}[{Proof of Theorem~\ref{th1}}] We will prove  $[Q(f)]^{(k)}$ takes a non-zero constant $a$ infinitely often. Without loss of generality, we may assume $a=1$.
Applying Lemma~\ref{lemma3.3} to the transcendental meromorphic function $Q(f)$ and $k\ge 1,$ set $A=\{0\}\subset\C$ and $\epsilon=\dfrac{1}{2q},$ we have
 \begin{align*}\label{ct1}
k\overline{N}(r,f)+N\Big(r,\dfrac{1}{Q(f)}\Big)-\overline{N}\Big(r,\dfrac{1}{Q(f)}\Big)&\leq N\Big(r,\dfrac{1}{Q(f)^{(k+1)}}\Big )+\dfrac{1}{2q} T(r,Q(f))\notag\\
&\leq N\Big(r,\dfrac{1}{Q(f)^{(k+1)}}\Big )+\dfrac{1}{2} T(r,f)+O(1)
\end{align*}

Together with Lemma~\ref{lemma3.2}, we get
\begin{align*}
T(r,Q(f))&\le k\overline{N}(r,f) +N\Big(r,\dfrac{1}{Q(f)}\Big)-\overline{N}\Big(r,\dfrac{1}{Q(f)}\Big)\\
&\quad-(k-1)\overline{N}(r,f)+\overline{N}\Big(r,\dfrac{1}{Q(f)}\Big)+N\Big(r,\dfrac{1}{(Q(f))^{(k)}-1}\Big)\\
&\quad-N\Big(r,\dfrac{1}{(Q(f))^{(k+1)}}\Big)+S(r,f)\\
&\le \overline{N}\Big(r,\dfrac{1}{Q(f)}\Big)+N\Big(r,\dfrac{1}{(Q(f))^{(k)}-1}\Big)+\dfrac{1}{2} T(r,f)+S(r,f)\\
&\le\sum_{i=1}^{l}\overline{N}\Big(r,\dfrac{1}{f-\alpha_i}\Big)+N\Big(r,\dfrac{1}{(Q(f))^{(k)}-1}\Big)+\dfrac{1}{2} T(r,f)+S(r,f)\\
&\le \Big(l+\dfrac{1}{2}\Big)T(r,f)+N\Big(r,\dfrac{1}{(Q(f))^{(k)}-1}\Big)+S(r,f).
\end{align*}

Hence,
\begin{equation*}\Big(q-l-\dfrac{1}{2}\Big)T(r,f)\le N\Big(r,\dfrac{1}{(Q(f))^{(k)}-1}\Big)+ S(r,f).
\end{equation*}

Thus $(Q(f))^{(k)}=1$ has infinitely many roots when $q\ge l+1.$ It follows that $(Q(f))^{(k)}$ assumes every finite non-zero value infinitely often. The proof is complete.
\end{proof}

\begin{proof}[{Proof of Theorem~\ref{th3}}]

Set 
\begin{align}&F=Q(f)+P(f^{(k)}),\label{ct1}\\
&R(f)=\frac{[Q(f)]'}{Q(f)}-\frac{F'}{F},\quad H(f)=P(f^{(k)})\Big(\frac{F'}{F}-\frac{[P(f^{(k)})]'}{P(f^{(k)})}\Big ).
\end{align}
 We have 
 \begin{align}m(r,R(f))=S(r,f),\label{m2}\\
Q(f)R(f)=H(f),\label{ct2}
\end{align} 
and
\begin{align*}T(r,F)&\leq T(r,Q(f))+T(r,P(f^{(k)})+S(r,f)=qT(r,f)+pT(r,f^{(k)})+S(r,f)\\
&\leq (q+p(k+1)) T(r,f)+S(r,f).\end{align*}
Hence $S(r,F)=S(r,f)$.

If $R(f)\equiv 0,$ then $H(f)\equiv 0$. This means
 \begin{equation} Q(f)=(c-1)P(f^{(k)}), \label{ct0}\end{equation} where $c\ne 0, 1$ is a constant. Since $q\geq (k+1)p+l+2$, Equation  \eqref{ct0} implies that $f$ cannot have poles. On the other hand, we have 
\begin{align*}qm(r,f)=m(r,Q(f))&\leq m(r,P(f^{(k)})+O(1)\leq pm(r,f^{(k)})+O(1)\\
&\leq pm(r,f)+pm\big(r,\frac{f^{(k)}}{f}\big)+O(1)=pm(r,f)+S(r,f).
\end{align*}
Therefore \begin{equation}m(r,f)=S(r,f),\label{ct01}\end{equation}
and
 $$T(r,f)=N(r,f)+m(r,f)=S(r,f),$$ which is a contradiction and then $R(f)\not\equiv 0.$
 
It is easy  to see that 
 \begin{align}m(r,H(f))&\leq m(r,P(f^{(k)}))+m\Big(r,\frac{F'}{F}-\frac{[P(f^{(k)})]'}{P(f^{(k)})}\Big )+O(1)\notag\\
&\leq pm(r,f^{(k)})+S(r,f)\leq p\Big (m(r,f)+m(r,\frac{f^{(k)}}{f})\Big)+S(r,f)\notag\\
&=pm(r,f)+S(r,f).\label{ct3}
\end{align}
We have
\begin{align*}qm(r,f)&=m(r,Q(f))+S(r,f)=m\Big(r,Q(f)R(f)\frac{1}{R(f)}\Big)+S(r,f)\\
&\leq m(r,Q(f)R(f))+m\Big(r,\frac{1}{R(f)}\Big)+S(r,f)\\
&=m(r,H(f))+m\Big(r,\frac{1}{R(f)}\Big)+S(r,f)\\
&\leq pm(r,f)+m\Big(r,\frac{1}{R(f)}\Big)+S(r,f),
\end{align*}
hence
\begin{equation}\label{ct4} (q-p)m(r,f)\leq m\Big(r,\frac{1}{R(f)}\Big)+S(r,f).\end{equation}
By the first main theorem and \eqref{m2}, we have
\begin{equation}\label{ct5}m(r,\frac{1}{R(f)})=T(r,R(f))-N(r,\frac{1}{R(f)})+O(1)=N(r,R(f))-N(r,\frac{1}{R(f)})+S(r,f).\end{equation}

From the definition of $R(f)$, we see immediately that the possible poles of $R(f)$ occur only at the poles of $f$ and the zeros of $Q(f)$ and $F.$ Note that $R(f)$ can have only simple poles. Now, suppose that $z_0$ is a pole of $f$ of order $s.$ Then $z_0$ is a pole of $Q(f)$ of order $qs$ and $H(f)$ of order at most $(s+k)p+1.$ Since $q\geq (k+1)p+l+2,$ we have $$qs-(s+k)p-1=(q-p)s-kp-1>0 .$$ Thus, by \eqref{ct2}, we deduce $z_0$ must be a zero of $R(f)$ of order at least $$qs-((s+k)p+1)=(q-p)s-kp-1.$$ 
Hence, we get 
\begin{align}N(r,R(f))&\leq \overline{N}\big(r,\frac{1}{F}\big)+\overline{N}\big(r,\frac{1}{Q(f)}\big)\leq \overline{N}\big(r,\frac{1}{F}\big)+\sum_{i=1}^{l}\overline{N}\big(r,\frac{1}{f-\alpha_i}\big)\cr
&\leq \overline{N}\big(r,\frac{1}{F}\big)+lT(r,f)+O(1),\label{t1}
\end{align}
and \begin{align}\label{ct6}N\big(r,\frac{1}{R(f)}\big)\geq (q-p)N(r,f)-(kp+1)\overline{N}(r,f).
\end{align}
Combining \eqref{ct4}, \eqref{ct5}, \eqref{t1}, \eqref{ct6} and by the first main theorem, we get
\begin{align*}(q-p)T(r,f)&=(q-p)N(r,f)+(q-p)m(r,f)\\
&\leq \overline{N}\big(r,\frac{1}{F}\big)+lT(r,f)+(kp+1)\overline{N}(r,f)+S(r,f)\\
&\leq \overline{N}\big(r,\frac{1}{F}\big)+(l+kp+1)T(r,f)+S(r,f).
\end{align*}
Hence, $$\big(q-(k+1)p-l-1\big)T(r,f)\leq \overline{N}\big(r,\frac{1}{F}\big)+S(r,f).$$

Thus $F=Q(f)+P(f^{(k)})$ assumes every finite value infinitely often when $q\geq (k+1)p+l+2.$

\end{proof}

\begin{proof}[{Proof of Theorem~\ref{th2}}] Denote by $G(z)=P(f(z))Q(f(z+c_1)+c_2f(z)).$ Applying the Second Main Theorem for the meromorphic function $G$ and  $0,\infty$, $\alpha$, we have
\begin{align}T(r,G)&\le \overline{N}(r,G)+\overline{N}(r,\frac 1G)+\overline{N}(r,\frac1{G-\alpha}) +S(r,f)\cr
&\le 2{T}(r,f) +\sum_{i=1}^h\overline{N}(r,\frac 1{f-\beta_i}) +\sum_{i=1}^l\overline{N}(r,\frac 1{f(z+c_1)+c_2f(z))-\alpha_i})\cr
&\quad+\overline{N}(r,\frac1{G-\alpha})+S(r,f)\cr
&\le (h+2l+2){T}(r,f) +\overline{N}(r,\frac1{G-\alpha}) +S(r,f).\label{4}
\end{align}

On the other hand, we have
\begin{align}\frac 1{f^qP(f)}&=\frac1{G}\frac{Q(f(z+c_1)+c_2f(z))}{f^q}\notag\\
&=\frac{1}{G}\prod_{i=1}^{l}\Big[\frac{f(z+c_1)+c_2f(z)-\alpha_i}{f(z)}\Big]^{m_i}.\label{p1}
\end{align} 
Therefore,
\begin{align*}(p+q)T(r,f)&=T(r,f^qP(f))+O(1)\\
&\le T(G)+\sum_{i=1}^{l}m_i T\Big(r,\frac{f(z+c_1)+c_2f(z)-\alpha_i}{f(z)}\Big)+O(1)\cr
&\le T(G)+\sum_{i=1}^{l}m_i T\Big(r,\frac{f(z+c_1)-\alpha_i}{f(z)}\Big)+O(1)\\
&\le T(G)+2\sum_{i=1}^{l}m_i T(r,f)+O(1)\\
&\le T(G)+2q T(r,f)+O(1),
\end{align*}
which implies 
\begin{align}T(r,G)\ge (p-q)T(r,f)+O(1).\label{5}
\end{align}

Combining (\ref{4}) and (\ref{5}), we have
\begin{align*}(p-q-h-2l-2)T(r,f)\le \overline{N}(r,\frac1{G-\alpha}) +S(r,f).
\end{align*}
Thus, $P(f)Q(f(z+c_1)+c_2f(z))-\alpha$ has infinitely many zeros  if $f$ is a transcendental meromorphic function of finite order and $p-q-h-2l-2>0$.

In particular, if $f$ is an entire function then Lemma~\ref{lemma3.5} (i) implies
\begin{align*}T(r,f(z+c_1)+c_2f(z))-\alpha_i)&=T(r,f(z+c_1)+c_2f(z))+O(1)\cr
&=m(r,f(z+c_1)+c_2f(z))+O(1)\cr
&=m\Big(r,f(z)\big(\frac{f(z+c_1)}{f(z)}+c_2\big)\Big)+O(1)\cr
&\leq m(r,f)+m\Big(r,\frac{f(z+c_1)}{f(z)}+c_2\Big)+O(1)\cr
&\leq m(r,f)+m\Big(r,\frac{f(z+c_1)}{f(z)}\Big)+O(1)\cr
&\leq T(r,f)+S(r,f).
\end{align*}
Together with (\ref{4}), we have 
\begin{align}T(r,G)&\le \overline{N}(r,\frac 1G)+\overline{N}(r,\frac1{G-\alpha}) +S(r,f)\cr
&\le \sum_{i=1}^h\overline{N}(r,\frac 1{f-\beta_i}) +\sum_{i=1}^l\overline{N}(r,\frac 1{f(z+c_1)+c_2f(z)-\alpha_i})\cr
&\quad+\overline{N}(r,\frac1{G-\alpha})+S(r,f)\cr
&\le \sum_{i=1}^h\overline{N}(r,\frac 1{f-\beta_i}) +\sum_{i=1}^lT(r,f(z+c_1)+c_2f(z)-\alpha_i)\cr
&\quad+\overline{N}(r,\frac1{G-\alpha})+S(r,f)\cr
&\le (h+l){T}(r,f) +\overline{N}(r,\frac1{G-\alpha}) +S(r,f).\label{6}
\end{align}

On the other hand, by \eqref{p1}, we obtain
\begin{align*}(p+q)T(r,f)&=T(r,f^qP(f))+O(1)\\
&\le T(G)+\sum_{i=1}^{l}m_i T\Big(r,\frac{f(z+c_1)+c_2f(z)-\alpha_i}{f(z)}\Big)+O(1)\cr
&\le T(G)+\sum_{i=1}^{l}m_i T\Big(r,\frac{f(z+c_1)-\alpha_i}{f(z)}\Big)+O(1)\\
&\le T(G)+\sum_{i=1}^{l}m_i T(r,f)+O(1)\\
&\le T(G)+q T(r,f)+O(1),
\end{align*}
where the fourth inequality follows from 
\begin{align*}T\Big(r,\frac{f(z+c_1)-\alpha_i}{f(z)}\Big)&=m\Big(r,\frac{f(z+c_1)-\alpha_i}{f(z)}\Big)+N\Big(r,\frac{f(z+c_1)-\alpha_i}{f(z)}\Big)\\
&\leq m\Big(r,\frac{f(z+c_1)}{f(z)}\Big)+m\Big(r,\frac{1}{f(z)}\Big)+N(r,f(z+c_1)-\alpha_i)\\
&\quad+N\Big(r,\frac{1}{f(z)}\Big)+O(1)\\
&\leq T(r,f)+S(r,f).
\end{align*}
Hence, we get
\begin{align}T(r,G)\ge pT(r,f)+S(r,f).\label{5.1}
\end{align}
Combining \eqref{6} and \eqref{5.1}, we have
$$(p-h-l)T(r,G)\le \bar{N}(r,\frac1{G-\alpha}) +S(r,f).$$

Hence, $P(f)Q(f(z+c_1)+c_2f(z))-\alpha$ has infinitely many zeros when $p-h-l>0$ if $f$ is a transcendental entire function of finite order.

\end{proof}

\end{document}